\setlist[enumerate,1]{font=\upshape, itemsep=.5ex}\setlist[itemize,1]{font=\upshape, itemsep=.5ex}
\def\Z{{\mathbb Z}}
\def\R{{\mathbb R}}
\def\C{{\mathbb C}}
\def\calm{\mathcal{M}}
\newcommand{\compactlist}{\begin{list}{\enumerate}{\setlength{\leftmargin}{1em}}}
\def\cs{\mathbin{\#}}
\def\co{\colon\thinspace}
\newtheorem{theorem}{Theorem}
\newtheorem{corollary}[theorem]{Corollary}
\theoremstyle{definition}
\def\MR#1{}
\begin{document}
\title{Signed Clasp Numbers and   Four-Genus Bounds}
\author{Charles Livingston}
\thanks{This work was supported by a grant from the National Science Foundation, NSF-DMS-1505586.   }
\address{Charles Livingston: Department of Mathematics, Indiana University, Bloomington, IN 47405}\email{livingst@indiana.edu}


\begin{abstract}   There exist knots having     positive and negative four-dimensional clasp numbers zero but having four-genus, and hence clasp number, arbitrarily large.  Such examples were first constructed by Allison Miller, answering a question of Juh\'asz--Zemke.  Further examples are constructed here, complementing those of Miller in that they are of infinite order in the concordance group, rather than being two-torsion.
\end{abstract}

\maketitle


\section{Introduction}

Let $q = p^2$,  where $p$ is an odd  prime integer. We consider the two-bridge knot $B(q,2)$, abbreviated $ B_q$, which can also be described as the $k$--twisted positive Whitehead double of the unknot, $D_+(U,k)$, where $k= (q-1)/4$.  Figure~\ref{fig-Bq} is an  illustration of $B_q$ in which the $k$ in the box denotes $k$ full right-handed twists.  If $k=1$ in the diagram, the resulting knot is the figure eight.   We prove the following theorem, which holds in  the smooth and   topological locally-flat catagories.

\begin{theorem} \label{thm-main2} If $p \ge 5$ is prime and $q = p^2$, then there exists a real number $c_p >0$ such that   the four-genus satisfies $g_4(nB_q) \ge c_pn$ for all $n >0$.
\end{theorem}

\begin{figure}[!htbp]
\labellist
\pinlabel{$k$} at 150 245
\endlabellist
\includegraphics[scale=.35]{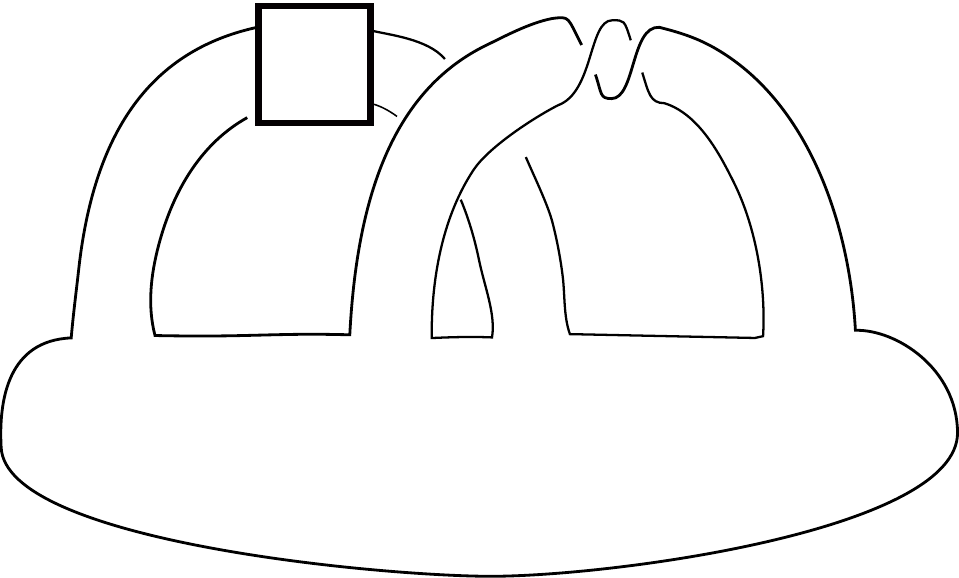}
\vskip.1in
\caption{The knot $B_q$, where $k = (q-1)/4$ denotes full right-handed twists. }
\label{fig-Bq}
\end{figure}

Finding this result was motivated by a question of Juh\'asz--Zemke~\cite{juhasz2020} concerning signed four-dimensional clasp numbers.   Let   $c(K)$ denote the  {\it four-dimensional clasp number\,}: this is the minimum value  of $m$  for which $K$ bounds  a smooth immersed disk in $B^4$ with $m$ double points. Let   $c^+(K)$ denote the minimum value  of $m$  for which $K$ bounds a smooth disk in $B^4$ with $m$ positive double points, and define  $c^-(K)$ similarly, minimizing negative double points.
In~\cite{juhasz2020} it was asked whether $c(K) - (c^+(K) + c^-(K))$ can be arbitrarily large.

Miller~\cite{MR4453695} provided the first examples answering the Juh\'asz--Zemke question positively.  It follows from Theorem~\ref{thm-main2} that the knots $B_q$ are also examples.  From the diagram it is clear that $B_q$, and hence $nB_q$, can be unknotted using only positive, or only negative, crossing changes.  Hence, $c^\pm(nB_q) = 0$.  If $nB_q$ bounds a disk in $B^4$ with $a$ double points, then those double points could be resolved to form an embedded surface of genus $a$;  it follows that $g_4(nB_q) \le c(nB_q)$ and thus Theorem~\ref{thm-main2} implies $c(nB_q) \ge  c_pn$.

The examples of this paper  are complementary to Miller's. The examples of~\cite{MR4453695}  are all amphichiral knots and thus satisfy $c^+(2K) = c^-(2K) = c(2K) = 0$; stated differently, the knots are of order two in the knot concordance group.  In contrast, the fact that $c(nB_q) >0 $ for all $n >0$ implies that $B_q$ is of infinite order in the concordance group.

\smallskip

\noindent{\it{Remarks.} }
\begin{itemize}
\item The key ingredients of the proof of Theorem~\ref{thm-main2} come from the work of Casson and Gordon~\cite{MR900252} and Gilmer~\cite{MR656619}.  The knot $B_{25}$ appears in~\cite{MR900252} as the   first example of an algebraically slice knot that is not slice: that is, $g_4(B_{25}) >0$.  A theorem of Jiang~\cite{MR620010}  demonstrates the  linear independence of the set   $\{B_{p^2}\}_{p \ge 5}$ in concordance, thus implying that $g_4(nB_{p^2}) >0$ for all $p \ge 5$ and all $n \ge 1$.  It is a theorem from ~\cite{MR656619} that permits Jiang's result to be improved to give a linearly increasing genus bound.

\item The proof of Theorem~\ref{thm-main2} provides a specific value of $c_p$ that is close to, but always less than, $1/2$. For instance, we find $c_5 = 1/4$ and $c_7 = 5/14$.  Finding any genus one knot $K$ for which $c^+(K) = c^-(K) = 0$ and $g_4(nK) \ge n/2$ for all $n \ge 1$ appears to be especially challenging.

\item The standard Seifert surface $F_q$ for $B_q$ contains  simple closed curves of framing  $\frac{q-1}{4}$ and $-1$ that are unknotted in $S^3$.  Using these curves we can construct an unknotted essential curve $\alpha$ on the Seifert surface $G$ for  $ B_q \cs \frac{ q-1}{4}B_q    = \frac{ q+3}{4}B_q$ of Seifert framing 0.  Surgery can be performed on  $G$ along $\alpha$ to produce a  surface in $B^4$ bounded by $\frac{ q+3}{4}B_q$   of genus one less than the genus of $G$; that is, it is  of genus $\frac{q-1}{4}$.  Thus, $g_4(\frac{ q+3}{4}B_q) \le \frac{q-1}{4}$.   It follows that $g_4(nB_q)$ is asymptotically bounded above by $\big(\frac{q-1}{q+3}\big)n$, 

\item  The invariants studied here are all knot concordance invariants.  From a modern perspective it would be interesting to prove the analog of Theorem~\ref{thm-main2} for a family of topologically slice knots.

\end{itemize}

\smallskip
\noindent{\it Acknowledgements.}  I appreciate helpful feedback from Pat Gilmer and Allison Miller.  Comments of a referee of an  early version of this paper led to significant improvements.


\section{Casson-Gordon invariants and four-genus bounds}\label{sec-cg}
For a knot  $K$, let $M_2(K)$ denote its 2--fold branched cover and let $\chi\co H_1(M_2(K)) \to \C^*$ be a character  taking values in the group of units generated by $e^{2 \pi i/q}$, where $q$ is a prime power.  (Such characters are naturally identified with homomorphism $\chi\co H_1(M_2(K)) \to \Z_q$.)   In~\cite{MR900252}, Casson and Gordon defined two rational-valued  invariants, $\sigma(K,\chi)$ and $\sigma_1 \tau (K,\chi)$.  The first is more readily computable in the case that $M_2(K)$ is a lens space; the second    provides an obstruction to   a knot   being slice.  They are related by the following result, an immediate consequence of~\cite[Theorem 3]{MR900252}.

\begin{theorem}  If $M_2(K)$ is a lens space and $\chi \co H_1(M_2(K)) \to \Z_q$ is a nontrivial character, then   \[ \big|  \sigma(K, \chi)  - \sigma_1 \tau(K, \chi) \big| \le 1.\]
\end{theorem}

\subsection{Computing $\sigma (B_q, \chi^r) $ for $r \ne 0  \mod p$}

We have the following result.
 
\begin{theorem}\label{thm-nonzero}
Let  $q = p^2$, where  $p$ is an odd prime.  Let $\chi$ denote a  character  that takes value  $e^{2 \pi i /p}$ on some generator of $H_1(M_2(B_q)) \cong \Z_q$.   Then
\[ \{\sigma(B_q, \chi^r)\}_{0<r< p}\ =\  \{  4r^2 - 2pr +1 \}_{0<r< p}.\]
\end{theorem}
\begin{proof}  This is essentially the key  numeric computation of~\cite{MR900252}.   The invariant  $\sigma(K,\chi)$ is defined in terms of signatures of Hermitian forms and is thus symmetic:    $\sigma(K,\chi^r) =\sigma(K,\chi^{-r}) = \sigma(K,\chi^{p-r}). $  This permits us to restrict attention to  even values of $r$:  $ \{\sigma(B_q, \chi^{2r})\}_{0<r< p/2}$.
In~\cite{MR900252}  it is shown that $\sigma(B_q, \chi^{2r} ) = 4r^2 - 2pr +1$ for $0 < r < p/2$.  (The result appears on page 196, with the values  `$`m$'' and ``$n$'' there having the value  $p$ in our application.)
\end{proof}

\subsection{Computing $\sigma_1 \tau (B_q, \chi^0) $.}

In general, there are few methods available for computing $\sigma_1\tau(K, \chi)$.  However, in the case that $K$ is of three-genus one and is algebraically slice, the invariant is determined by the Levine-Tristram signature functions of certain  knots formed as simple closed curves on a genus one Seifert surface.  This is a consequence of results related to companionship proved independently by Cooper \cite{CooperThesis},   Gilmer~\cite{MR711523}, and Litherland~\cite{MR780587}.  The paper~\cite{MR3096507} presents a more recent exposition.  We isolate the result we need.  In this statement, $\sigma_K(\omega)$ denotes the Tristram-Levine signature function defined on the unit circle in $\C^*$.

\begin{theorem}  Suppose that $K$ bounds a genus one Seifert surface $F$ and $H_1(M_2(K)) \cong \Z_q$ with $q = p^2$ for some prime $p$.   Suppose that  $\alpha$ is an essential simple closed curve on $F$ for which the $V([\alpha] ,[ \alpha]) = 0$, where $V$ is the Seifert form of $F$.   Then for $\chi \co H_1(M_2(K)) \to \Z_p \subset \C^*$, 
\[  \sigma_1 \tau (K, \chi) =    2 \sigma_{\alpha}(\zeta^r), \]
for some $r$, where $\chi(x) = \zeta \in \C^* $ for a generator $x \in H_1(M_2(K))$.
\end{theorem}

The Levine-Tristran signature function satisfies  $\sigma_K(1)  = 0$ for all $K$.  Thus we have the following corollary when applied to $\chi^0$, which  is trivial.

\begin{corollary} \label{cor-zero} Suppose that $K$ bounds a genus one Seifert surface $F$,  $H_1(M_2(K)) \cong \Z_q$, and $V(\alpha, \alpha) = 0 $ for a simple closed curve  $\alpha $ representing a nontrivial homology class, $[\alpha] \in H_1(F)$.  Then $\sigma_1 \tau (K, \chi^0 ) =   0$ for all $\chi$.
\end{corollary}

\subsection{Bounds on  $\sigma_1 \tau (B_q, \chi^r) $.}
\begin{theorem}\label{thm-boundscg} Assume $q = p^2$ where $p \ge 5$ is an odd   prime. 
\begin{itemize}

\item There exists a  generator  $\chi$ of the group of order $p$ characters on $H_1(M_2(B_q))$   such that $\sigma_1 \tau(B_q, \chi) \le   \frac{9 - p^2}{4}$.
\item  $\sigma_1 \tau (B_q,  \chi^r) \le 0 $ for all $r$.
\end{itemize}
\end{theorem}
\begin{proof}  We consider the function   $f(r)  = 4r^2 - 2pr +1$ that appears in Theorem~\ref{thm-nonzero}  as a real quadratic in the variable $r$.
Its minimum occurs at $p/4$.
The closest integer point to $p/4$ is either $(p-1)/4$ or $(p+1)/4 $ depending on whether $p \equiv 1 \mod 4$ or $  p \equiv  3 \mod4$.   In both cases the value at this  point is $(5 - p^2)/4 < -1$.  Since $\sigma(B_q, \chi^r)$ and $\sigma_1\tau(B_q, \chi^r)$ differ by at most one, we have the first statement.

For integers $r$ with  $ 1 \le r \le p/2$, the maximum value of the quadratic $f(r)$ must be at an endpoint, either $r=1$ or $r = (p-1)/2$.  We compute $f(1) = (5 - 2p)$ and $f((p-1)/2) = 2 - p$. The larger of the two is $2 -p <1$.  Even upon adding 1, this is negative.  Thus, if $\sigma_1 \tau(B_q, \chi^r)$ were to be positive for some $r$, it would have to be at $r = 0$, where the value was shown to be 0 in Corollary~\ref{cor-zero}.

\end{proof}

\section{The genus bound}

The proof of  Theorem~\ref{thm-main2}   depends on the following special case of a theorem of     Gilmer~\cite[Theorem 1]{MR656619} that relates  values of $\sigma_1\tau(K, \chi)$ to $g_4(K)$.

\begin{theorem}\label{thm:bound2}  Let $K$ be   a knot for which  $H_1(M_2(K)  ) \cong (\Z_q)^n   $, where  $q$ is a prime power.   If  $g_4(K)   \le n/2$ and the classical signature of $K$ satisfies $\sigma(K) = 0$,  then there is a subgroup $\calm \subset   (\Z_{q})^n \subset H_1(M_2(K))$ of order at least
$q^{(n- 2g_4(K))/2}$ such that for all $\chi \in \calm$,
\[ \big| \sigma_1 \tau (K, \chi) \big| \le 4g_4(K).\]
\end{theorem}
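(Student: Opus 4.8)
The plan is to realize the four-genus by a branched cover, feed the resulting $4$-manifold into Gilmer's theorem to control $\sigma_1\tau$, and then reserve the bulk of the argument for a linear-algebra count over $\Z_p$. First I would fix a surface $F \subset B^4$ of genus $g \defeq g_4(K)$ with $\partial F = K$, push it into the interior, and let $W$ be the double cover of $B^4$ branched along $F$. A Euler-characteristic count gives $\partial W = M(K)$, $H_1(W;\Q) = 0$, and $b_2(W) = 2g$, while a Novikov-additivity/$G$-signature argument (independence of the choice of $F$) gives $\mathrm{sign}(W) = \sigma(K)$. A character $\chi\co H_1(M(K)) \to \Z_p$ extends over $W$ precisely when it vanishes on $L \defeq \ker\big(H_1(M(K);\Z_p) \to H_1(W;\Z_p)\big)$, and for such $\chi$ Gilmer's theorem bounds $\sigma_1\tau(K,\chi)$ by the twisted signature of $W$: since $b_2(W) = 2g$ and $H_1(W;\Q)=0$, it yields $\big|\sigma_1\tau(K,\chi) - \sigma(K)\big| \le 4g$. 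Invoking the hypothesis $\sigma(K) = 0$ then gives $\big|\sigma_1\tau(K,\chi)\big| \le 4g_4(K)$ for every $\chi$ extending over $W$, so I would take $\calm$ to be the annihilator of $L$ inside $\hom\big(H_1(M(K)),\Z_p\big) \cong (\Z_p)^n$.

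It then remains to bound $\dim_{\Z_p} \calm = n - \dim_{\Z_p} L$ from below, i.e.\ to show $\dim_{\Z_p} L \le n/2 + g$; the hypothesis $g_4(K) \le n/2$ guarantees that the resulting estimate $\dim \calm \ge (n-2g)/2$ is nonnegative. Here I would use the nonsingular linking form $\lambda$ on $(\Z_p)^n \subset H_1(M(K))$. The ``half-lives-half-dies'' principle for the $4$-manifold $W$ shows that $L$ is self-annihilating for $\lambda$ up to a defect controlled by the intersection form on $H_2(W)$: the failure of $L$ to be isotropic, measured by $\dim\big(L/(L \cap L^{\perp})\big)$, is at most $\mathrm{rank}\,H_2(W) = b_2(W) = 2g$. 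Combining the chain $\dim L - 2g \le \dim(L \cap L^\perp) \le \dim L^\perp = n - \dim L$ yields $2\dim L \le n + 2g$, which is exactly the desired bound, and passing to annihilators produces the claimed subspace $\calm \subset (\Z_p)^n$ of dimension at least $(n-2g)/2$.

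The main obstacle is this defect estimate: one must show precisely that the non-isotropic part of $L$ has dimension at most $b_2(W)$, by expressing $\lambda$ on classes of $L$ as intersection numbers of surfaces they bound in $W$ and checking that this pairing factors through a quotient of $H_2(W)$. Some care is also needed to keep the $p$-primary bookkeeping clean---using that $|T|$ is prime to $p$, so that reduction mod $p$ identifies $\hom(H_1(M(K)),\Z_p)$ with $(\Z_p)^n$ and the linking form is nonsingular on this summand---and to confirm that the twisted-signature bound in Gilmer's theorem genuinely contributes the constant $4g$, which relies on $H_1(W;\Q)=0$ so that the twisted second Betti number of $W$ does not exceed that of $W$ itself.
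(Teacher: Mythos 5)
The paper does not actually prove this statement: it is quoted directly from Gilmer's Theorem~1 in~\cite{MR656619} (specialized via the hypothesis $\sigma(K)=0$ and the linking-form identification of characters with elements of $(\Z_p)^n$). So what you have written is a reconstruction of the proof of a cited result, and your architecture is indeed Gilmer's: take a genus-minimizing surface $F$, pass to the double cover $W$ of $B^4$ branched along $F$, bound $\sigma_1\tau(K,\chi)$ for characters extending over $W$ via twisted signatures, and count the extendable characters. As a roadmap this is the right (and essentially the only known) route.

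However, the two points you defer are not routine care --- they are the entire technical content, and as stated each has a problem. First, the dimension count: the inequality you want follows most cleanly from the exact sequence $H_2(W;\Z_p)\to H_2(W,M(K);\Z_p)\xrightarrow{\partial} H_1(M(K);\Z_p)\to H_1(W;\Z_p)$, which gives $L=\operatorname{im}\partial$ and hence $\dim L\le\dim_{\Z_p}H_2(W,M(K);\Z_p)=\dim_{\Z_p}H_2(W;\Z_p)$, so no linking form or ``half-lives-half-dies'' defect argument is needed at all (and the literal half-dimensional statement for $H_1$ of a $3$--manifold boundary of a $4$--manifold is false in general, e.g.\ $T^2\times D^2$). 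The real issue is that the bound must be the $\Z_p$-dimension of $H_2(W;\Z_p)$, not the rational Betti number $b_2(W)=2g$ that you invoke; these agree for the branched cover $W$ only after a transfer/Smith-theory computation showing $H_*(W;\Z)$ has no $p$-torsion (true for odd $p$, delicate for $p=2$). Second, the signature bound: $H_1(W;\Q)=0$ does not give $\dim H_2(W;\C_\chi)\le b_2(W)$. For a nontrivial character the Euler-characteristic count gives $\dim H_2(W;\C_\chi)=b_2(W)+1+\dim H_1(W;\C_\chi)+\dim H_3(W;\C_\chi)$, so the twisted first and third Betti numbers must be controlled separately; this is where Gilmer's proof does real work, and it is why the clean constant $4g$ requires both the $2g$ cushion from $|\operatorname{sign}(W)|=|\sigma(K)|$ and the hypothesis $\sigma(K)=0$. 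So: correct strategy, accurate reduction to Gilmer's lemmas, but not yet a proof --- the steps you label as ``the main obstacle'' and ``some care'' are exactly the theorem.
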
\
\noindent  We will refer to the subgroup $\calm$ as a {\it metabolizer}.  In the statement of Gilmer's theorem in~\cite{MR656619} there is an additional term $\mu(K, \chi)$, but prior to the statement of that theorem he points out that $\mu(K, \chi) =  0$ in the case of characters $\chi$ of prime  power order.

\subsection{Proof of Theorem~\ref{thm-main2}}
The continuing assumption is that $q = p^2$ where $p \ge 5$ is a prime. Here is a restatement of the theorem with the value of $c_p$ specified.

\medskip

\noindent{\bf Theorem~\ref{thm-main2}.} {\it  For every odd prime $p \ge 5$,  let  $c_p =  (\frac{1}{2}  - \frac{8}{p^2+7})n$. Then  for $q = p^2$,  $g_4(nB_q) \ge c_pn$.}
\begin{proof}
We will  first assume that $n$ is such that  $g_4(nB_q) < n/2$ and find a  value of $c_p < 1/2$ for which $g_4(nB_q) \ge c_p n$ for all such $n$.  Then, in any cases that $g_4(nB_q) \ge n/2$ we will certainly also have that $g_4(nB_q) \ge c_p n$.

We abbreviate $g_4(nB_q) = g$.
We have the $H_1(nB_q)) \cong (\Z_q)^n$.  The metabolizer $\calm$ given by Theorem~\ref{thm:bound2} has order at least $p^{(n - 2g)}$.  Since each element in $\calm$ has order at most $p^2$, an independent set of generators of $\calm$ must have at least $p^{(n - 2g)/2}$ elements.   Since the value is an integer, we can take the ceiling and  let $d = \lceil \frac{n - 2g}{2 } \rceil$.

Represent a set of generators of $\calm$ as vectors in $(\Z_q)^n$.  Together these can be used to form  the rows of a matrix with at least  $d$ rows.  Row operations and column interchanges can convert this into a matrix for which the top left $d \times d$ block is an upper triangular matrix with nonzero diagonal entries and with the  further property that  rows corresponding to diagonal entries divisible by $p$ have all their entries divisible by $p$.  We can multiply each of the  elements of $\calm$ that correspond to these rows by some element in $\Z_q$ so that the first non-zero entry is $p$.  Further row operations can transform this so that the  top left $d \times d$ block  is diagonal with   all entries $p$.

The sum of the   vectors formed from the  first $d$ rows of that matrix  is an element of $\calm$ for which the first $d$ values are all $p$.  This element  can be multiplied by $k$  so that  the $d$ diagonal   entries are $kp$.  We can choose $k$ so that the character  $\chi$  on $H_1(B_q)$ that corresponds to $kp$ is the same as  $\chi$ given in Theorem~\ref{thm-boundscg} for which $\sigma_1 \tau(B_q, \chi) \le (9 - p^2)/4$.   The vector corresponds to a character $\overline{\chi}\co H_1(M_2(nB_q) ) \to \C^*$ taking values among $p$--roots of unity.

Applying the fact that  $\sigma_1 \tau(B_q, \chi)  \le 0$ for all $\chi$, along with the additivity of $\sigma_1 \tau$ (see~\cite{MR711523}), after taking absolute values  we have
\[
d \big( \frac{p^2 - 9}{4}  \big)= \frac{ (n - 2g)(p^2 -  9)}{8 }\le \big|  \sigma_1\tau(nB_q, \overline{\chi}) \big|  \le 4 g,
\]
where the second inequality comes from Theorem~\ref{thm:bound2} .

Solving for $g$ we find
\[ g \ge\big( \frac{p^2 -9}{2p^2+ 14}\big) n = (\frac{1}{2}  - \frac{8}{p^2+7})n. \]
\end{proof}


\section{Observations and questions}

\begin{enumerate}
\item  {\it The stable clasp number.}  A  function  $f \co \Z_{\ge 0} \to \R_{\ge 0}$ is called {\it subadditive} if $f(a+b) \le f(a) +f(b)$ for all $a$ and $b$.  For any such function,    $\lim_{n\to \infty}f(n)/n$ exists.  In~\cite{MR2745668} this is used to define  the stable four-genus of a knot $K$:  $g_s(K) = \lim_{n \to \infty}g_4(nK)/n$.  In the exact same way, one can define the stable clasp number of a knot $K$ to be $c_s(K) = \lim_{n \to \infty}c(nK)/n$.  For Miller's examples~\cite{MR4453695}, $c_s(K) = 0$.  We have  
\[  \frac{q-9}{2q - 14} \le c_s(B_q) \le 1 .
\]

{\bf Problems.}   Determine $c_s(B_q)$ exactly. Find any knot $K$ for which  $c_s(K)$  for which $c_s(K) \not \in \Z$.

\item   Find topologically slice knots $K_n$ for which $c(K_n) - (c^+(K_n)+ c^-(K_n))$ goes to infinity as $n$ increases.  Can such example be found for which $c^+(K_n) = 0= c^-(K_n)$ for all $n$?

\item The examples in this paper and those in~\cite{MR4453695} depended on estimates of the four-genus.  Are there examples of knots $K$ for which $c^+(K) = 0 = c^-(K)$ and $c(K) > g_4(K)$?
\end{enumerate}



\bibliography{../../../../BibTexComplete}
\bibliographystyle{plain}	

\end{document}